\newtheorem{theorem}{Theorem}[section]
\theoremstyle{definition}
\newtheorem{definition}[theorem]{Definition}
\numberwithin{equation}{section}
\begin{document}

\title{Iterative Methods for Computing Eigenvalues and Eigenvectors}
\author{Maysum Panju}
\address{University of Waterloo, Faculty of Mathematics, Waterloo, ON, N2L 3G1, Canada}
\email{mhpanju@math.uwaterloo.ca}
\keywords{Eigenvalues, eigenvectors, iterative methods}%
\date{\today}


\begin{abstract} We examine some numerical iterative methods for computing the eigenvalues and eigenvectors of real matrices. The five methods examined here range from the simple power iteration method to the more complicated QR iteration method. The derivations, procedure, and advantages of each method are briefly discussed.
\end{abstract}

\maketitle

\section{Introduction}

Eigenvalues and eigenvectors play an important part in the applications of linear algebra. The naive method of finding the eigenvalues of a matrix involves finding the roots of the characteristic polynomial of the matrix. In industrial sized matrices, however, this method is not feasible, and the eigenvalues must be obtained by other means. Fortunately, there exist several other techniques for finding eigenvalues and eigenvectors of a matrix, some of which fall under the realm of iterative methods. These methods work by repeatedly refining approximations to the eigenvectors or eigenvalues, and can be terminated whenever the approximations reach a suitable degree of accuracy. Iterative methods form the basis of much of modern day eigenvalue computation. 

In this paper, we outline five such iterative methods, and summarize their derivations, procedures, and advantages. The methods to be examined are the power iteration method, the shifted inverse iteration method, the Rayleigh quotient method, the simultaneous iteration  method, and the QR method. This paper is meant to be a survey over existing algorithms for the eigenvalue computation problem.

Section 2 of this paper provides a brief review of some of the linear algebra background required to understand the concepts that are discussed. In section 3, the iterative methods are each presented, in order of complexity, and are studied in brief detail. Finally, in section 4, we provide some concluding remarks and mention some of the additional algorithm refinements that are used in practice.

For the purposes of this paper, we restrict our attention to real-valued, square matrices with a full set of real eigenvalues.

\section{Linear Algebra Review}

We begin by reviewing some basic definitions from linear algebra. It is assumed that the reader is comfortable with the notions of matrix and vector multiplication.

\begin{definition} 
Let $A \in \mathbb{R}^{n \times n}$. A nonzero vector $x \in \mathbb{R}^n$ is called an \emph{eigenvector} of $A$ with corresponding \emph{eigenvalue} $\lambda \in \mathbb{C}$ if $Ax = \lambda x$.
\end{definition}

Note that eigenvectors of a matrix are precisely the vectors in $\mathbb{R}^n$ whose direction is preserved when multiplied with the matrix. Although eigenvalues may be not be real in general, we will focus on matrices whose eigenvalues are all real numbers. This is true in particular if the matrix is symmetric; some of the methods we detail below only work for symmetric matrices.

It is often necessary to compute the eigenvalues of a matrix. The most immediate method for doing so involves finding the roots of characteristic polynomials.

\begin{definition}
The \emph{characteristic polynomial} of $A$, denoted $P_A(x)$ for $x \in \mathbb{R}$, is the degree $n$ polynomial defined by $$P_A(x) := \det(xI - A).$$
\end{definition}

It is straightforward to see that the roots of the characteristic polynomial of a matrix are exactly the eigenvalues of the matrix, since the matrix $zI - A$ is singular precisely when $z$ is an eigenvalue of $A$. It follows that computation of eigenvalues can be reduced to finding the roots of polynomials. Unfortunately, solving polynomials is generally a difficult problem, as there is no closed formula for solving polynomial equations of degree 5 or higher. The only way to proceed is to employ numerical techniques to solve these equations.

We have just seen that eigenvalues may be found by solving polynomial equations. The converse is also true. Given any monic polynomial $$f(z) = z^n + a_{n-1}z^{n-1} + \ldots + a_1z + a_0,$$ we can construct the companion matrix
\[ \left[ \begin{array}{ccccccc}
0 &   &  &  \cdots    &   & -a_0   \\
1 & 0 &   &           &   & -a_1   \\
  & 1 & 0 &           &   & -a_2   \\
  &   &  & \ddots     &   & \vdots \\   
  &   &   &         1 & 0 & -a_{n-2} \\
  &   &   &           & 1 & -a_{n-1} \end{array} \right]\]

It can be seen that the characteristic polynomial for the companion matrix is exactly the polynomial $f(z)$. Thus the problem of computing the roots of a polynomial equation reduces to finding the eigenvalues of a corresponding matrix. Since polynomials in general cannot be solved exactly, it follows that there is no method that will produce exact eigenvalues for a general matrix.

However, there do exist methods for computing eigenvalues and eigenvectors that do not rely upon solving the characteristic polynomial. In this paper, we look at some iterative techniques used for tackling this problem. These are methods that, when given some initial approximations, produce sequences of scalars or vectors that converge towards the desired eigenvalues or eigenvectors. On the other hand, we can make the notion of convergence of matrices precise as follows.

\begin{definition}
Let $A^{(1)}, A^{(2)}, A^{(3)}, \ldots$ be a sequence of matrices in $\mathbb{R}^{m \times n}$. We say that the sequence of matrices \emph{converges} to a matrix $A \in \mathbb{R}^{m \times n}$ if the sequence $A^{(k)}_{i,j}$ of real numbers converges to $A_{i,j}$ for every pair $1 \leq i \leq m$, $1 \leq j \leq n$, as $k$ approaches infinity. That is, a sequence of matrices converges if the sequences given by each entry of the matrix all converge.
\end{definition}

Later in this paper, it will be necessary to use what is known as the QR decomposition of a matrix. 

\begin{definition}
The \emph{QR decomposition} of a matrix $A$ is the representation of $A$ as a product $$A = QR,$$ where $Q$ is an orthogonal matrix and $R$ is an upper triangular matrix with positive diagonal entries.
\end{definition}

Recall that an \emph{orthogonal} matrix $U$ satisfies $U^TU=I$. Importantly, the columns of $Q$ are orthogonal vectors, and span the same space as the columns of $A$. It is a fact that any matrix $A$ has a QR decomposition $A=QR$, which is unique when $A$ has full rank.

Geometrically, the QR factorization means that if the columns of $A$ form the basis of a vector space, then there is an orthonormal basis for that vector space. This orthonormal basis would would form the columns of $Q$, and the conversion matrix for this change of basis is the upper triangular matrix $R$. The methods for obtaining a QR decomposition of a matrix has been well studied and is a computationally feasible task\footnote{The simplest method for computing a QR factorization of a matrix $A$ is to apply the Gram-Schmidt algorithm on the columns of $A$. A student of linear algebra may be horrified at the prospect of carrying out this tedious procedure once, let alone once \emph{per iteration} of an iterative method, but recall that when working with matrices of thousands of rows or more, all computations are done electronically. Furthermore, the methods used to calculate a QR decomposition are usually more complicated than Gram-Schmidt; for instance, a common technique is to apply a series of Householder transformations \cite{George1987223}.}.

At this point, we turn our attention to the iterative methods themselves.
\section{Description of the Iterative Methods}

The iterative methods in this section work by repeatedly refining estimates of eigenvalues of a matrix, using a function called the Rayleigh quotient.

\begin{definition}
Let $A \in \mathbb{R}^{n \times n}$. Then the \emph{Rayleigh quotient} of a nonzero vector $x \in \mathbb{R}^n$ is $$r(x) := \frac{x^TAx}{x^Tx}.$$
\end{definition}

Note that if $x$ is an eigenvector for $A$ with corresponding eigenvalue $\lambda$, then the Rayleigh quotient for $x$ is
$$r(x) = \frac{x^TAx}{x^Tx} = \frac{\lambda x^Tx}{x^Tx} = \lambda$$
which is exactly the corresponding eigenvalue.

In fact, given any nonzero $x \in \mathbb{R}^{n}$, the Rayleigh quotient $r(x)$ is the value that minimizes the function $f(\alpha) = \left\|\alpha x - Ax\right\|_2$ over all real numbers $\alpha$, which measures the error incurred if $x$ is assumed to be an eigenvector of $A$. Thus, if we treat $x$ as an estimate for an eigenvector of $A$, then $r(x)$ can be seen as the best estimate for the corresponding eigenvalue of $A$, since it minimizes this error value.

We are now ready to consider the first technique for iterative eigenvalue computation.

\subsection{Power Iteration}

Let $A \in \mathbb{R}^{n \times n}$. Recall that if $q$ is an eigenvector for $A$ with eigenvalue $\lambda$, then $Aq = \lambda q$, and in general, $A^kq = \lambda^kq$ for all $k \in \mathbb{N}$. This observation is the foundation of the power iteration method.

Suppose that the set $\{q_i\}$ of unit eigenvectors of $A$ forms a basis of $\mathbb{R}^{n}$, and has corresponding real eigenvalues $\{\lambda_i\}$ such that $\left|\lambda_1\right| > \left|\lambda_2\right| > \ldots > \left|\lambda_n\right|$.  Let $v^{(0)}$ be an approximation to an eigenvector of $A$, with $\left\|v^{(0)}\right\| = 1$. (We use the term ``approximation'' in this situation quite loosely, allowing it to refer to any quantity that is believed to be ``reasonably close'' to the correct value\footnote{These approximations are always made when the correct value is unknown (indeed, they are made in an attempt to \emph{determine} the correct value); we therefore do not require any particularly demanding conditions on how ``close'' the approximation must be to being correct.}.)We can write $v^{(0)}$ as a linear combination of the eigenvectors of $A$; for some $c_1, \ldots , c_n \in \mathbb{R}$ we have that $$v^{(0)} = c_1q_1 + \ldots + c_nq_n,$$ and we will assume for now that $c_1 \neq 0$.

Now
$$Av^{(0)} = c_1\lambda_1 q_1 + c_2\lambda_2 q_2 +\ldots + c_n\lambda_n q_n$$
and so
\begin{eqnarray*}
	A^k v^{(0)} & = & c_1\lambda_1^kq_1 + c_2\lambda_2^k q_2 +\ldots + c_n\lambda_n^kq_n  \\
	& = &  \lambda_1^k\left(c_1q_1 + c_2\left( \frac{\lambda_2}{\lambda_1}\right)^kq_2 +\ldots + c_n\left( \frac{\lambda_n}{\lambda_1}\right)^kq_n\right) \\
\end{eqnarray*}

Since the eigenvalues are assumed to be real, distinct, and ordered by decreasing magnitude, it follows that for all $i = 2, \ldots, n$, 
$$\lim_{k\rightarrow \infty}\left(\frac{\lambda_i}{\lambda_1}\right)^k =0.$$
So, as $k$ increases, $A^kv^{(0)}$ approaches $c_1\lambda_1^k q_1$, and thus for large values of $k$,
$$q_1 \approx \frac{A^k v^{(0)}}{\left\|A^k v^{(0)}\right\|}.$$

The method of power iteration can then be stated as follows: \\ \\
	\texttt{\indent Pick a starting vector $v^{(0)}$ with $\left\|v^{(0)}\right\|=1$} \\
	\texttt{\indent For $k =1, 2, \ldots $ \\
	\texttt{\indent \indent Let $w = Av^{(k-1)}$} \\
	\texttt{\indent \indent Let $v^{(k)} = w/\left\|w\right\|$} }\\

In each iteration, $v^{(k)}$ gets closer and closer to the eigenvector $q_1$. The algorithm may be terminated at any point with a reasonable approximation to the eigenvector; the eigenvalue estimate can be found by applying the Rayleigh quotient to the resulting $v^{(k)}$.

The power iteration method is simple and elegant, but suffers some major drawbacks. The method only returns a single eigenvector estimate, and it is always the one corresponding to the eigenvalue of largest magnitude. In addition, convergence is only guaranteed if the eigenvalues are distinct---in particular, the two eigenvalues of largest absolute value must have distinct magnitudes. The rate of convergence primarily depends upon the ratio of these magnitudes, so if the two largest eigenvalues have similar sizes, then the convergence will be slow. 

In spite of its drawbacks, the power method is still used in some applications, since it works well on large, sparse matrices when only a single eigenvector is needed. However, there are other methods that overcome the difficulties of the power iteration method.

\subsection{Inverse Iteration}

The inverse iteration method is a natural generalization of the power iteration method. 

If $A$ is an invertible matrix with real, nonzero eigenvalues $\{\lambda_1, \ldots, \lambda_n\}$, then the eigenvalues of $A^{-1}$ are $\{1/\lambda_1, \ldots, 1/\lambda_n\}$. Thus if $\left|\lambda_1\right| > \left|\lambda_2\right| > \ldots > \left|\lambda_n\right|$, then $\left|1/\lambda_1\right| < \left|1/\lambda_2\right| < \ldots < \left|1/\lambda_n\right|$, and so by applying the power method iteration on $A^{-1}$, we can obtain the eigenvector $q_n$ and eigenvalue $\lambda_n$.

This gives a way to find the eigenvalue of smallest magnitude, assuming that $A^{-1}$ is known. In general, though, the inverse matrix is not given, and calculating it is a computationally expensive operation. However, computing $x = A^{-1}b$ is equivalent to solving the system $Ax = b$ for $x$ given $b$, and this operation can be efficiently performed. Fortunately, this is all that is required for the inverse iteration method, which we can now state as follows: \\ \\
  \texttt{\indent Pick a starting vector $v^{(0)}$ with $\left\|v^{(0)}\right\|=1$} \\
	\texttt{\indent For $k =1, 2, \ldots $ \\
	\texttt{\indent \indent Solve $Aw = v^{(k-1)}$ for $w$} \\
	\texttt{\indent \indent Let $v^{(k)} = w/\left\|w\right\|$} }\\

The advantage of inverse iteration is that it can be easily adapted to find \emph{any} eigenvalue of the matrix $A$, instead of just the extreme ones. Observe that for any $\mu \in \mathbb{R}$, the matrix $B = A - \mu I$ has eigenvalues $\{\lambda_1 - \mu, \ldots, \lambda_n - \mu\}$. In particular, by choosing $\mu$ to be close to an eigenvalue $\lambda_j$ of $A$, we can ensure that $\lambda_j - \mu$ is the eigenvalue of $B$ of smallest magnitude. Then by applying inverse iteration on $B$, an approximation to $q_j$ and $\lambda_j$ can be obtained.

This version of the algorithm, known as inverse iteration with shift, can be summarized as follows: \\ \\
  \texttt{\indent Pick some $\mu$ close to the desired eigenvalue} \\
  \texttt{\indent Pick a starting vector $v^{(0)}$ with $\left\|v^{(0)}\right\|=1$} \\
	\texttt{\indent For $k =1, 2, \ldots $ \\
	\texttt{\indent \indent Solve $(A-\mu I)w = v^{(k-1)}$ for $w$} \\
	\texttt{\indent \indent Let $v^{(k)} = w/\left\|w\right\|$} }\\

The inverse iteration with shift method allows the computation of any eigenvalue of the matrix. However, in order to compute a particular eigenvalue, you must have some initial approximation of it to start the iteration. In cases where an eigenvalue estimate is given, the inverse iteration with shift method works well.

\subsection{Rayleigh Quotient Iteration}

The inverse iteration method can be improved if we drop the restriction that the shift value remains constant throughout the iterations. 

Each iteration of the shifted inverse iteration method returns an approximate eigenvector, given an estimate of an eigenvalue. The Rayleigh quotient, on the other hand, produces an approximate eigenvalue when given an estimated eigenvector. By combining these two operations, we get a new variation of the inverse shift algorithm, where the shift value is recomputed during each iteration to become the Rayleigh quotient of the current eigenvector estimate.

This method, called the Rayleigh quotient iteration method (or simply the RQI method), is as follows: \\ \\  
  \texttt{\indent Pick a starting vector $v^{(0)}$ with $\left\|v^{(0)}\right\|=1$} \\
  \texttt{\indent Let $\lambda^{(0)} = r(v^{(0)}) := (v^{(0)})^TA(v^{(0)})$} \\
	\texttt{\indent For $k =1, 2, \ldots $ \\
	\texttt{\indent \indent Solve $(A-\lambda^{(k-1)}I)w = v^{(k-1)}$ for $w$} \\
	\texttt{\indent \indent Let $v^{(k)} = w/\left\|w\right\|$} }\\
	\texttt{\indent \indent Let $\lambda^{(k)} = r(v^{(k)}) := (v^{(k)})^TA(v^{(k)})$ } \\

In this method, we no longer need to have an initial eigenvalue estimate supplied; all that is required is an initial vector $v^{(0)}$. The eigenvector produced depends on the initial vector chosen. Note that since each vector $v^{(k)}$ is a unit vector, we have $\left(v^{(k)}\right)^T v^{(k)} = 1$, simplifying the expression for $r\left(v^{(k)}\right)$.

The main advantage of the RQI method is that it converges to an eigenvector very quickly, since the approximations to both the eigenvalue and the eigenvector are improved during each iteration. Thus far, we have not given any quantitative discussion on the speed at which an iteration sequence converges to the limit, since these technical details are not the purpose of this paper. However, it is worth mentioning here that the convergence rate of the RQI method is said to be \emph{cubic}, which means that the number of correct digits in the approximation triples during each iteration \cite{RayleighConv}. In contrast, the other algorithms described in this paper all have the much slower \emph{linear} rate of convergence.\footnote{In further contrast, the well known Newton's method for rapidly finding roots of differentiable functions has \emph{quadratic} convergence. Generally, the study of algorithm convergence rates is extremely technical, and the author feels no regret in omitting further details from this paper.}

One very significant disadvantage for the RQI method is that it does not always work in the general case. The method is only guaranteed to converge when the matrix $A$ is both real and symmetric, and is known to fail in the cases where the matrix is not symmetric.

\subsection{Simultaneous Iteration}

The methods discussed so far are only capable of computing a single eigenvalue at a time. In order to compute different eigenvalues of a matrix, the methods must be reapplied  several times, each time with different initial conditions. We now describe a method that is capable, in some situations, of producing all of the eigenvalues of a matrix at once.

Once again, our starting point is the basic power iteration method. Let $A$ be a real, symmetric, full rank matrix; in particular, $A$ has real eigenvalues and a complete set of orthogonal eigenvectors. Recall that given the starting vector $v^{(0)}$, we can write it as a linear combination of eigenvectors $\{q_i\}$ of $A$: 
$$v^{(0)} = c_1q_1 + \ldots + c_nq_n.$$
Note, however, that the power iteration method only looks at eigenvectors that are nontrivial components in this linear combination; that is, only the eigenvectors that are not orthogonal to $v^{(0)}$ have a chance of being found by the power iteration method. This suggests that by applying the power iteration method to several different starting vectors, each orthogonal to all of the others, there is a possibility of finding different eigenvalues.

With this idea in mind, we may take the following approach. Begin with a basis of $n$ linearly independent vectors $\{v_1^{(0)} \ldots v_n^{(0)}\}$ of $\mathbb{R}^n$, arranged in the matrix $$V^{(0)} = \left[v_1^{(0)}\right|\cdots \left| v_n^{(0)} \right].$$ Let $$V^{(k)} = A^kV^{(0)} = \left[v_1^{(k)}\right|\cdots \left| v_n^{(k)} \right],$$ which effectively applies the power iteration method to all of the vectors $\{v_1^{(0)} \ldots v_n^{(0)}\}$ at once. We thus expect that as $k \rightarrow \infty$, the columns of $V^{(k)}$ become scaled copies of $q_1$, the unit eigenvector corresponding to the eigenvalue of largest magnitude. (Note that the columns are not necessarily unit vectors themselves, since we did not normalize the vectors in this version of the algorithm).

So far, we have not found anything useful or new. We have obtained $n$ eigenvectors, but they are possibly all in the same direction. The main development occurs when we decide to orthonormalize the columns of $V^{(k)}$ at each iteration. In the original power iteration method, the obtained eigenvector estimate was normalized in each iteration. In this multivector version, the analogue is to obtain an orthonormal set of eigenvector estimates during each iteration, forcing the eigenvector approximations to be orthogonal at all times. This is done by using the QR decomposition of $V^{(k)}$.

In each step of this iteration method, we obtain a new matrix, $W$, by multiplying $A$ by the current eigenvector approximation matrix, $V^{(k-1)}$. We can then extract the orthonormal column vectors of $Q$ from the QR decomposition of $W$, thus ensuring that the eigenvector approximations remain an orthogonal basis of unit vectors. This process is then repeated as desired. The algorithm, known as the simultaneous iteration method \cite{simultIter}, can be written as follows: \\ \\  
  \texttt{\indent Pick a starting basis $\{v^{(0)}_1,\ldots,v^{(0)}_n$\} of $\mathbb{R}^n$.} \\
  \texttt{\indent Build the matrix $V = \left[v_1^{(0)}\right|\cdots \left| v_n^{(0)} \right]$} \\
  \texttt{\indent Obtain the factors $Q^{(0)}R^{(0)}=V^{(0)}$} \\
	\texttt{\indent For $k =1, 2, \ldots $ }\\
	\texttt{\indent \indent Let $W = AQ^{(k-1)}$ }\\
	\texttt{\indent \indent Obtain the factors $Q^{(k)}R^{(k)}=W$} \\

It is a fact that if the matrix $A$ has $n$ orthogonal eigenvectors $q_1, \ldots, q_n$ with corresponding real eigenvalues $|\lambda_1| > \ldots > |\lambda_n|$, and if the leading principal submatrices of the product $\left[ q_1 | \cdots | q_n \right]^TV^{(0)}$ are nonsingular, then the columns of $Q^{(k)}$ will converge towards a basis of eigenvectors of $A$. (Recall that the leading principal submatrices of the matrix $B$ are the top left square submatrices of $B$.) So, at last, here is a method that, under some hypotheses, computes all of the eigenvectors of the matrix $A$ at once.

However, this method is generally not used in practice. It turns out that there is a more elegant form of this algorithm, which we examine now.

\subsection{The QR Method}
The QR method for computing eigenvalues and eigenvectors \cite{QRAlgo}, like the simultaneous iteration method, allows the computation of all eigenvalues and eigenvectors of a real, symmetric, full rank matrix at once. Based upon the matrix decomposition from which it earned its name, the simplest form of the QR iteration algorithm may be written as follows:\\ \\
  \texttt{\indent Let $A^{(0)} = A$ }\\
	\texttt{\indent For $k =1, 2, \ldots $ }\\
	\texttt{\indent \indent Obtain the factors $Q^{(k)}R^{(k)}=A^{(k-1)}$} \\
	\texttt{\indent \indent Let $A^{(k)} = R^{(k)}Q^{(k)}$} \\

Simply put, in each iteration, we take the QR decomposition of the current matrix $A^{(k-1)}$, and multiply the factors $Q$ and $R$ in the \emph{reverse} order to obtain the new matrix $A^{(k)}$. 

It is surprising that this non-intuitive process would converge to anything useful, let alone a full set of eigenvectors and eigenvalues of $A$. However, it turns out that this algorithm can, in some sense, be seen to be equivalent to the simultaneous iteration method. The simultaneous iteration method of the previous section can be written as follows: \\ \\  
  \texttt{\indent Let $\underline{Q}^{(0)}=I$} \\
  	\texttt{\indent For $k =1, 2, \ldots $ }\\
	\texttt{\indent \indent Let $W = A\underline{Q}^{(k-1)}$ }\\
	\texttt{\indent \indent Obtain the factors $\underline{Q}^{(k)}R^{(k)}=W$} \\
	\texttt{\indent \indent Let $A^{(k)} = \left(\underline{Q}^{(k)}\right)^TA \underline{Q}^{(k)}$ } \\
	\texttt{\indent \indent Let $\underline{R}^{(k)}=R^{(k)}R^{(k-1)}\cdots R^{(1)}$} \\

Here we have renamed the $Q^{(k)}$ matrices of the previous section as $\underline{Q}^{(k)}$, and have introduced the matrices $A^{(k)}$ and $\underline{R}^{(k)}$ which do not affect the correctness of the algorithm.

The QR method of this section can be rewritten in the following way: \\ \\
	\texttt{\indent Let $A^{(0)} = A$ }\\
	\texttt{\indent For $k =1, 2, \ldots $ }\\
	\texttt{\indent \indent Obtain the factors $Q^{(k)}R^{(k)}=A^{(k-1)}$} \\
	\texttt{\indent \indent Let $A^{(k)} = R^{(k)}Q^{(k)}$} \\
	\texttt{\indent \indent Let $\underline{Q}^{(k)} = Q^{(1)}Q^{(2)} \cdots Q^{(k)}$} \\
	\texttt{\indent \indent Let $\underline{R}^{(k)} = R^{(k)}R^{(k-1)} \cdots R^{(1)}$} \\

Again, the introduction of the matrices $\underline{Q}^{(k)}$ and $\underline{R}^{(k)}$ do not affect the outcome of the algorithm. Note that by this algorithm, the following identities hold for all values of $k$:
\begin{itemize}
	\item $A^{(k-1)} = Q^{(k)}R^{(k)}$
	\item $A^{(k)} = R^{(k)}Q^{(k)}$	
	\item $\left(Q^{(k)}\right)^TQ^{(k)} = I$
\end{itemize}

The similarity between these two algorithms becomes apparent by the following theorem.

\begin{theorem}
The simultaneous iteration method and the QR method both generate the same sequences of matrices $A^{(k)}$, $\underline{Q}^{(k)}$, and $\underline{R}^{(k)}$, which satisfy the following relations:
\begin{enumerate}
\item \label{cond1} $A^k = \underline{Q}^{(k)} \underline{R}^{(k)}$ \\
\item \label{cond2} $A^{(k)} = \left(\underline{Q}^{(k)}\right)^TA \underline{Q}^{(k)}$
\end{enumerate}
\end{theorem}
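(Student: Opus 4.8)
The plan is to argue by induction on $k$, showing \emph{separately} that each of the two algorithms produces sequences satisfying (\ref{cond1}) and (\ref{cond2}), and then to invoke uniqueness of the QR decomposition to conclude that the sequences produced by the two algorithms actually coincide. For the base case $k=0$ one sets $\underline{Q}^{(0)}=I$ and $\underline{R}^{(0)}=I$ (the empty product) in both constructions, and $A^{(0)}=A$; since $A^0=I$, both relations are then immediate, and note $A=I^TAI$ is consistent with the $k=0$ reading of (\ref{cond2}).

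For the simultaneous iteration method, relation (\ref{cond2}) holds \emph{by definition} of $A^{(k)}$, so only (\ref{cond1}) requires work. Assuming $A^{k-1}=\underline{Q}^{(k-1)}\underline{R}^{(k-1)}$, I would write $A^k = A\underline{Q}^{(k-1)}\underline{R}^{(k-1)}$ and then use the factorization step $\underline{Q}^{(k)}R^{(k)} = A\underline{Q}^{(k-1)}$ to obtain $A^k = \underline{Q}^{(k)}R^{(k)}\underline{R}^{(k-1)} = \underline{Q}^{(k)}\underline{R}^{(k)}$, using $\underline{R}^{(k)}=R^{(k)}R^{(k-1)}\cdots R^{(1)}$.

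For the QR method I would first record the per-iteration identities already noted in the text, namely $A^{(k-1)}=Q^{(k)}R^{(k)}$, $A^{(k)}=R^{(k)}Q^{(k)}$, $(Q^{(k)})^TQ^{(k)}=I$, together with $\underline{Q}^{(k)}=\underline{Q}^{(k-1)}Q^{(k)}$ and $\underline{R}^{(k)}=R^{(k)}\underline{R}^{(k-1)}$. For (\ref{cond2}): from $A^{(k-1)}=Q^{(k)}R^{(k)}$ and orthogonality of $Q^{(k)}$ we get $R^{(k)}=(Q^{(k)})^TA^{(k-1)}$, hence $A^{(k)}=R^{(k)}Q^{(k)}=(Q^{(k)})^TA^{(k-1)}Q^{(k)}$; substituting the inductive hypothesis $A^{(k-1)}=(\underline{Q}^{(k-1)})^TA\underline{Q}^{(k-1)}$ and collapsing products gives $A^{(k)}=(\underline{Q}^{(k)})^TA\underline{Q}^{(k)}$. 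For (\ref{cond1}): starting from $A^k = A\,A^{k-1} = A\underline{Q}^{(k-1)}\underline{R}^{(k-1)}$, I would rewrite relation (\ref{cond2}) at stage $k-1$ as $A\underline{Q}^{(k-1)}=\underline{Q}^{(k-1)}A^{(k-1)}$ and combine it with $A^{(k-1)}=Q^{(k)}R^{(k)}$ to get $A\underline{Q}^{(k-1)}=\underline{Q}^{(k-1)}Q^{(k)}R^{(k)}=\underline{Q}^{(k)}R^{(k)}$, so that $A^k=\underline{Q}^{(k)}R^{(k)}\underline{R}^{(k-1)}=\underline{Q}^{(k)}\underline{R}^{(k)}$. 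This feedback step — using the already-established relation (\ref{cond2}) at level $k-1$ to transform $A\underline{Q}^{(k-1)}$ — is the crux of the argument and the place I expect to have to be most careful, especially about the reversed order of the factors in $\underline{R}^{(k)}$ versus $\underline{Q}^{(k)}$.

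Finally, to see that the two algorithms generate the \emph{same} sequences, I would observe that in each construction $\underline{Q}^{(k)}$ is a product of orthogonal matrices, hence orthogonal, and $\underline{R}^{(k)}$ is a product of upper triangular matrices with positive diagonal entries, hence itself upper triangular with positive diagonal entries. Thus in both cases $\underline{Q}^{(k)}\underline{R}^{(k)}$ is a bona fide QR decomposition of $A^k$; since $A$ has full rank, so does $A^k$, and its QR decomposition is unique, forcing the $\underline{Q}^{(k)}$ and $\underline{R}^{(k)}$ produced by the two methods to agree for every $k$. Relation (\ref{cond2}) then forces the matrices $A^{(k)}$ to agree as well, which completes the proof.
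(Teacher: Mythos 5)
Your proof is correct and follows essentially the same route as the paper: induction on $k$, verifying relations (1) and (2) separately for the simultaneous iteration and QR algorithms, and then invoking uniqueness of the QR decomposition of the full-rank matrix $A^k$ to conclude the two methods generate identical sequences. The only local difference is that, for relation (1) in the QR method, you use the intertwining identity $A\underline{Q}^{(k-1)}=\underline{Q}^{(k-1)}A^{(k-1)}$ obtained from relation (2) at stage $k-1$, rather than the paper's telescoping expansion via $R^{(j)}Q^{(j)}=Q^{(j+1)}R^{(j+1)}$, and you make explicit the orthogonality and positive-diagonal triangularity facts needed for the uniqueness step, which the paper leaves implicit; both are sound.
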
 
\begin{proof}
We proceed by induction on $k$.

When $k = 0$, it is clear that $A^{(0)}$, $\underline{Q}^{(0)}$, and $\underline{R}^{(0)}$ are the same for both the simultaneous iteration method algorithm and the QR method algorithm, and these values satisfy (\ref{cond1}) and (\ref{cond2}).

Suppose now that the values of these matrices are the same for both algorithms for some iteration $k - 1$, and that they satisfy the two properties (\ref{cond1}) and (\ref{cond2}) in this iteration.

In the simultaneous iteration method, we have 
\begin{eqnarray*}
A^{k} &=& AA^{k-1} \\
&=& A \left(\underline{Q}^{(k-1)} \underline{R}^{(k-1)}\right) \\
&=& \left( \underline{Q}^{(k)} R^{(k)} \right) \underline{R}^{(k-1)} \\
&=& \underline{Q}^{(k)} \underline{R}^{(k)}
\end{eqnarray*} 
and thus property (\ref{cond1}) is satisfied on the $k$th iteration; property (\ref{cond2}) is satisfied directly by definition of the simultaneous iteration algorithm.

In the QR method, we have
\begin{eqnarray*}
	A^{k} & = & AA^{k-1} \\
	& = & A\left(Q^{(1)}Q^{(2)}Q^{(3)}\ldots Q^{(k-1)}R^{(k-1)}\ldots R^{(1)}\right) \\
	& = & \left(Q^{(1)}R^{(1)}\right)\left(Q^{(1)}Q^{(2)}Q^{(3)}\ldots Q^{(k-1)}R^{(k-1)}\ldots R^{(1)}\right) \\
	& = & Q^{(1)}\left(Q^{(2)}R^{(2)}\right)Q^{(2)}Q^{(3)}\ldots Q^{(k-1)}R^{(k-1)}\ldots R^{(1)} \\
	& = & Q^{(1)}Q^{(2)}\left(Q^{(3)}R^{(3)}\right)Q^{(3)}\ldots Q^{(k-1)}R^{(k-1)}\ldots R^{(1)} \\
	& = & \ldots \\
	& = & Q^{(1)}Q^{(2)}Q^{(3)}\ldots Q^{(k-1)}\left(Q^{(k)}R^{(k)}\right)R^{(k-1)}\ldots R^{(1)} \\
	& = & \underline{Q}^{(k)} \underline{R}^{(k)}
\end{eqnarray*}
which proves that property (\ref{cond1}) holds for the $k$th iteration of the QR method.

We also have that
\begin{eqnarray*}
	A^{(k)} & = & R^{(k)}Q^{(k)} \\
	& = & \left(\left(Q^{(k)}\right)^TQ^{(k)}\right)R^{(k)}Q^{(k)} \\
	& = & \left(Q^{(k)}\right)^TA^{(k-1)}Q^{(k)} \\
	& = & \left(Q^{(k)}\right)^T\left(\left(\underline{Q}^{(k-1)}\right)^T A\underline{Q}^{(k-1)}\right)Q^{(k)} \\
	& = & \left(\underline{Q}^{(k)}\right)^TA\underline{Q}^{(k)}
\end{eqnarray*}
which proves that property (\ref{cond2}) holds for the $k$th iteration of the QR method as well.

By hypothesis, the values of $A^{(k-1)}$, $\underline{Q}^{(k-1)}$, and $\underline{R}^{(k-1)}$ were the same for both algorithms. Since both algorithms satisfy (\ref{cond1}) on the $k$th iteration, we have $A^k = \underline{Q}^{(k)} \underline{R}^{(k)}$ for both algorithms, and since the QR decomposition is unique, it follows that $\underline{Q}^{(k)}$ and $\underline{R}^{(k)}$ are also the same for both algorithms. Both algorithms also satisfy (\ref{cond2}) on the $k$th iteration, which means that the matrix $A^{(k)} = (\underline{Q}^{(k)})^TA\underline{Q}^{(k)}$ is also the same for both algorithms on the $k$th iteration. 

Hence, both algorithms produce the same values for the matrices $A^{(k)}$, $\underline{Q}^{(k)}$, and $\underline{R}^{(k)}$ which satisfy the relationships (\ref{cond1}) and (\ref{cond2}). By induction, this holds for all $k$, proving the theorem.
\end{proof}

Now, since we saw that the columns of $\underline{Q}^{(k)}$ converged to a basis of eigenvectors of $A$ in the simultaneous iteration method, the result of the above theorem tells us that the same holds for the columns of $\underline{Q}^{(k)}$ as computed using the QR algorithm. In particular, the columns $\underline{q}_i$ of $\underline{Q}^{(k)}$ each converge to a unit eigenvector $q_i$ of $A$.

Property (\ref{cond2}) of the theorem tells us that $$A_{ij}^{(k)} = \left(\underline{q}_i^{(k)}\right)^T A\underline{q}_j^{(k)}$$ where $\underline{q}_i^{(k)}$ and $\underline{q}_j^{(k)}$ are columns $i$ and $j$, respectively, of $\underline{Q}^{(k)}$. But, we saw that $\underline{q}_i^{(k)} \rightarrow q_i$ and $\underline{q}_j^{(k)} \rightarrow q_j$ as $k \rightarrow \infty$, where $q_i$ and $q_j$ are unit eigenvectors of $A$ and are orthogonal if $i \neq j$.

In the case that $i \neq j$, we have that $$A_{ij}^{(k)} \rightarrow q_i^{T} Aq_j = \lambda_j q_i^{T}q_j = 0$$ since the eigenvectors are orthogonal; thus $A^{(k)}$ approaches a diagonal matrix.

In the case that $i = j$, we have that $$A_{ii}^{(k)} \rightarrow q_i^{T} Aq_i = \lambda_i q_i^{T}q_i = \lambda_i$$ since the eigenvectors are unit vectors; thus the diagonal elements of $A^{(k)}$ approach the eigenvalues of $A$.

We finally see that the QR iteration method provides an orthogonal matrix $\underline{Q}^{(k)}$ whose columns approach eigenvectors of $A$, and a diagonal matrix $A^{(k)}$ whose diagonal elements approach the eigenvalues of $A$. Thus the QR method presents a simple, elegant algorithm for finding the eigenvectors and eigenvalues of a real, symmetric, full rank matrix. 

The QR method is, in essence, the same as the simultaneous iteration method, and therefore suffers the same restriction on the matrices it can be applied to. Unfortunately, these two methods may fail when there are non-real eigenvalues, or when the eigenvectors do not form an orthogonal basis of $\mathbb{R}^n$. However, symmetric matrices with real entries do occur frequently in many physical applications, so there is some use for these algorithms.

\section{Conclusions and Improvements}

The purpose of this paper was to provide an overview of some of the numeric techniques used to compute eigenvalues and eigenvectors of matrices. These methods are all based on simple ideas that were steadily generalized and adapted until they became powerful iterative algorithms. 

However, even the QR iteration method as presented in this paper is generally not suitable for use in practice, even in the situations when it can be applied. There are many ways to refine the  algorithms we have seen in order to speed up the implementations. For example, before applying the QR algorithm, it is common to reduce the matrix $A$ into a simpler form, such as a tridiagonal matrix: performing QR decompositions on the new matrix then becomes much faster. It is also possible to improve the QR iteration method by incorporating shifts and Rayleigh quotients, just as these concepts helped improve the original power iteration method. The variations that are implemented in practice are vastly more sophisticated than the simple algorithms presented here.

As with all algorithms, the question of improvement is always an open problem. Modern information processing deals with increasingly large matrices, and efficient methods for computing eigenvalues and eigenvectors are extremely necessary. The techniques of the future may become much more complicated in order to keep up with this growing demand, but as we have seen, there is a lot that can be done using simple ideas and well-chosen generalizations.

\section{Acknowledgements}

I would like to thank the referees for their careful reviewing and thorough feedback on this paper. I would also like to thank Dr.~Pascal Poupart for introducing me to the study of computational linear algebra.


\bibliographystyle{abbrv}
\bibliography{NumericalEigenstuffBib}

\end{document}